\newtheorem{thm}{Theorem}[section]
\newtheorem{corollary}[thm]{Corollary}
\newtheorem{example}[thm]{Example}
\newtheorem{lemma}[thm]{Lemma}
\newtheorem{remark}[thm]{Remark}
\theoremstyle{definition}
\newtheorem{definition}[thm]{Definition}
 \numberwithin{equation}{section}
\newcommand\supp {\operatorname{Supp}}
\newcommand\ass {\operatorname{Ass}}
\newcommand\coass {\operatorname{Coass}}
\newcommand\att{\operatorname{Att}}
\newcommand{\cd}[2]{\operatorname{cd}_{#1}(#2)}
\newcommand{\cdd}[3]{\operatorname{cd}_{#1}(#2,#3)}
\newcommand{\grad}[3]{\operatorname{grade}_{#1}(#2,#3)}
\newcommand\depth{\operatorname{depth}}
\newcommand{\pd }{\operatorname{proj\,dim}}
\newcommand{\ann }{\operatorname{Ann}}
\newcommand{\ext}[4]{\operatorname{Ext}^{#1}_{#2}(#3,#4)}
\newcommand{\h}[3]{\operatorname{H}^{#1}_{#2}(#3)}
\newcommand{\gh}[4]{\operatorname{H}^{#1}_{#2}(#3,#4)}
 \newcommand\fa{\mathfrak a}
 \newcommand\fm{\mathfrak m}
\newcommand\fp{\mathfrak p}
\newcommand\fP{\mathfrak P}
\begin{document}

\title[Lichtenbaum-Hartshorne theorem]{Lichtenbaum-Hartshorne vanishing theorem for generalized local cohomology modules}%
(Originally published in Persian: A. Fathi, {\it Lichtenbaum-Hartshorne vanishing theorem for generalized local cohomology modules}, J. Adv. Math. Model. {\bf13}(2) (2023) 250--258. DOI:10.22055/JAMM.2023.43821.2166)

\author[A.~fathi]{Ali Fathi}
\address{Department of Mathematics, Zanjan Branch,
Islamic Azad University,  Zanjan, Iran.}
\email{alif1387@gmail.com}

\keywords{Generalized local cohomology module,  Lichtenbaum-Hartshorne vanishing theorem, coassociated prime ideal, attached prime ideal.}
\subjclass[2010]{13D45, 13E05, 13E10}


\begin{abstract}
Let $R$ be a commutative Noetherian ring, and let $\mathfrak a$ be a proper  ideal of $R$. Let $M$ be a non-zero finitely generated $R$-module with the finite projective dimension $p$. Also, let $N$ be a non-zero finitely generated $R$-module with $N\neq\mathfrak{a} N$, and assume that $c$ is the greatest non-negative integer with the property that $\operatorname{H}^i_{\mathfrak a}(N)$, the $i$-th local cohomology module of $N$ with respect to $\fa$, is non-zero.  It is known that $\operatorname{H}^i_{\mathfrak a}(M, N)$, the $i$-th generalized local cohomology module  of $M$ and $N$ with respect to $\mathfrak a$, is zero for all  $i>p+c$. In this paper, we obtain the coassociated prime ideals of  $\operatorname{H}^{p+c}_{\mathfrak a}(M, N)$. Using this,  in the case when $R$ is a  local ring and $c$ is equal to the dimension of $N$, we give  a necessary and sufficient condition  for the vanishing of $\operatorname{H}^{p+c}_{\mathfrak a}(M, N)$ which  extends  the  Lichtenbaum-Hartshorne vanishing theorem for  generalized local cohomology modules.
 \end{abstract}
\maketitle
\setcounter{section}{0}
\section{\bf Introduction}
Throughout this paper, let $R$ be a commutative Noetherian ring with non-zero identity. Let $\fa$  be an ideal of $R$ and $N$ be an $R$-module. The $i$-th local cohomology module of $N$ with respect to $\fa$ was defined by Grothendieck as follows:
$$\h i{\fa}N:={\underset{n\in\mathbb N}{\varinjlim}\operatorname{Ext}^i_R(R/\fa^n,  N)};$$
see \cite{bs} for more details. For a pair of  $R$-modules $M$ and $N$, the $i$-generalized local cohomology module of $M, N$ with respect to $\fa$ was introduced by Herzog \cite{h} as follows:
$$\gh i{\fa}MN:=\underset{n\in\mathbb N}{\varinjlim}\operatorname{Ext}^i_R(M/\fa^n M,  N);$$
see \cite{h, b} for more details. It is clear that $\gh i{\fa}RN=\h i{\fa}N$. The cohomological dimension of $N$ with respect to $\fa$ and the cohomological dimension of $M, N$ with respect to $\fa$ are defined, respectively,  as follow:
$$\cd {\fa}N:=\sup\{i\in\mathbb N_0: \h i{\fa}N\neq 0\}$$
and
$$\cdd {\fa}MN:=\sup\{i\in\mathbb N_0: \gh i{\fa}MN\neq 0\}.$$

Assume that $N$ is finitely generated with finite dimension $d$. For each $i>d$, $\h i{\fa}N=0$ \cite[Theorem 6.1.2]{bs} (in other words,  $\cd {\fa}N\leq d$)
 and $\h d{\fa}N$ is Artinian  \cite[Exercise 7.1.7]{bs}.
When $R$ is local, Dibaei and Yassemi  as a main result proved in \cite[Theorem A]{dy}  that
$$\att_R(\h d{\fa}N)=\{\fp\in\ass_R(N): \cd{\fa}{R/\fp}=d\}.$$
This equality also  holds without the hypothesis that $R$ is local (see \cite[Theorem 2.5]{d}). Also, if $M$ is finitely generated with finite projective dimension $p$, then $\gh {i}{\fa}MN=0$  for all $i>p+d$ \cite[Lemma   5.1]{b} (i. e., $\cdd {\fa}MN\leq p+d$) and $\gh {p+d}{\fa}MN$  is Artinian (see for example \cite[Theorem 2.9]{maf} or \cite[Proposition 3.1]{hv}). When $R$ is local,  as a generalization of the theorem of  Dibaei and Yassemi, Gu and Chu show in \cite[Theorem 2.3]{gc} that
$$\att_R(\gh {p+d}{\fa}MN)=\{\fp\in\ass_R(N): \cdd {\fa}M{R/\fp}=p+d\}.$$
In \cite[Theorem 5.3]{f2015}, Fathi, Tehranian and Zakeri proved this equality in the case when $R$ is not necessarily local. They also show in \cite[Theorem 5.6]{f2015} that
\begin{equation}\tag{\dag}
\att_R(\gh {p+d}{\fa}MN)=\supp_R(\ext pRMR)\cap\att_R(\h d{\fa}N)
\end{equation}
whenever $R/ \ann_R(\h d{\fa}N)$ is a complete semilocal ring. This equality allows us to compute the set of attached prime ideals of the top generalized local cohomology module $\gh {p+d}{\fa}MN$ from the set of  attached prime ideals of the top local cohomology module $\h d{\fa}N$.

Now we set $c:=\cd {\fa}N$. For all $i>p+c$, we have $\gh i{\fa}MN=0$; see \cite[Proposition 2.8]{hv}. Since $c\leq d$,
$p+c$ yields   a sharper upper bound for $\cdd {\fa}MN$.  Note that $\h c{\fa}N$ and  $\gh {p+c}{\fa}MN$ are not necessarily Artinian.
In Theorem \ref{coass}, using the set of coassociated prime ideals of $\h c{\fa}N$, we compute the set of coassociated prime ideals of
$\gh {p+c}{\fa}MN$. More precisely, we show that
\begin{align*}
 &\coass_R\left(\gh{p+c}{\fa}MN\right)\\
 &=\{\fp\in\supp_R(M)\cap\coass_R\left(\h c{\fa}N\right): \pd_{R_\fp}(M_\fp)=p\}.
 \end{align*}

As a consequence of this equality, we prove in Corollary \ref{att}  that the equality (\dag) holds  even if    $R/ \ann_R(\h d{\fa}N)$ is not a complete semilocal ring,  and we show that
 \begin{align}
 \tag{\ddag}&\att_R\left(\gh{p+d}{\fa}MN\right)\\
 \nonumber& =\{\fp\in\supp_R(M)\cap\ass_R(N): \pd_{R_\fp}(M_\fp)=p, \cd{\fa}{R/\fp}=d\}.
 \end{align}
  In particular, if $R$ is local and $M$ is Cohen-Macaulay, then it is shown in  Corollary \ref{cm} that
 $$\att_R(\gh {p+d}{\fa}MN)=\{\fp\in\supp_R(M)\cap\ass_R(N): \cd {\fa}{R/\fp}=d\}.$$
 Finally, using the equality (\ddag), we extend  the Lichtenbaum-Hartshorne vanishing theorem for generalized local cohomology modules. More precisely, when $R$ is a local ring, we prove in Theorem \ref{lh} that  $\gh {p+d}{\fa}MN=0$ if and only if
   for all  $\fP\in\supp_{\widehat R}(\widehat M)\cap\ass_{\widehat R}(\widehat N)$
 with $\dim_{\widehat R}(\widehat R/\fP)=d$  and  $\pd_{\widehat R_\fP}(\widehat M_{\fP})=p$, $\dim_{\widehat R}(\widehat R/(\fa\widehat R+\fP))>0$.

\section{\bf Preliminaries}
 Let $M$  be an $R$-module. We denote the localization of $M$ at $\fp$ by $M_\fp$, and   the set of all prime ideals $\fp$  of $R$ such that
 $M_\fp$ is nonzero is called the support of $M$ and denoted by $\supp_R(M)$.  Also,  the annihilator of $M$ in $R$, denoted by $\ann_R(M)$,  is defined to be the set
 $\{r\in R: rx=0 \textrm{ for all } x\in M\}$.  If $\fp:=\ann_R(Rx)$ is a prime ideal of $R$ for some $x\in M$,
  then $\fp$ is called an associated prime ideal of $M$, and we denote  the set of all associated prime ideals of $M$ by $\ass_R(M)$.
   We will denote the set of all positive integers (respectively, non-negative integers)  by $\mathbb N$ (respectively, $\mathbb {N}_0$).

 The concepts of attached prime ideal and  secondary representation   as the dual of the concepts  of associated prime ideal and primary decomposition were introduced by Macdonald in \cite{m}.   An $R$-module $M$ is said to be secondary if $M\neq 0$ and, for each $r\in R$, the endomorphism $\mu_r:M\rightarrow M$ defined by  $\mu_r(x)=rx$ (for $x\in M$) is either surjective or nilpotent. If $M$ is secondary, then
 $\fp:=\sqrt{\ann_R(M)}$ is a prime ideal and $M$ is said to be $\fp$-secondary. A prime ideal $\fp$ is called an attached prime ideal of $M$ if $M$ has a $\fp$-secondary quotient. We denote the set of all attached prime ideals of $M$ by $\att_R(M)$.
If $M$ can be  written  as a finite sum of its secondary submodules, then we say that $M$ has a secondary representation. Such a secondary representation
$$M=M_1+\dots+M_t\quad  {\rm with }\ \sqrt{\ann_R(M_i)}=\fp_i\ {\rm for }\ i=1,\dots, t$$
  of $M$ is said to be minimal when none of the modules $M_i$ ($1\leq i\leq t$) is redundant and the prime ideals $\fp_1,\dots,\fp_t$ are distinct. Since the sum of two $\fp$-secondary submodules of $M$
is again  $\fp$-secondary,  so if $M$ has a secondary representation, then it has a minimal one.  When the above secondary representation is minimal, then $\att_R(M)=\{\fp_1,\dots, \fp_t\}$, and hence $t$ and the set  $\{\fp_1,\dots, \fp_t\}$ are independent of the choice of minimal secondary representation of $M$.  Artinian modules have secondary representation.

Yassemi \cite{y}  has introduced the coassociated prime ideal as a dual of associated prime ideal. In Yassemi's  definition, we  do not  need  to assume  that the module has a secondary representation, and if a module has a secondary representation, then its sets of coassociated prime ideals and attached prime ideals are same (see \cite[Theorem 1.14]{y}).
\begin{definition}
We say that an $R$-module $M$ is  cocyclic when $M$ is a submodule of $E(R/\fm)$ for some maximal ideal $\fm$ of $R$, where $E(R/\fm)$ denotes the injective envelope of $R/\fm$.
\end{definition}
\begin{definition}
We say that a prime ideal $\fp$ of $R$ is a coassociated prime ideal of an $R$-module $M$ when there exists a cocyclic homomorphic image $L$ of $M$ such that $\fp=\ann_R(L)$. We denote by $\coass_R(M)$ the set of all coassociated prime ideals of $M$.
\end{definition}
 \section{\bf Main results}
 In the following theorem, using the set of coassociated prime ideals  of  the top  local cohomology module, we compute the set of coassociated prime ideals of  the top generalized local cohomology module.
 \begin{thm}\label{coass}
 Let $\fa$ be an ideal of $R$ and $M$ be a non-zero finitely generated $R$-module with finite projective dimension $p$. Let $N$ be an $R$-module such that  $N\neq \fa N$ and  $c:=\cd {\fa}N$. Then for each $n>p+c$, $\gh n{\fa}MN=0$ and
  $$\gh {p+c}{\fa}MN\cong\ext pRMR\otimes_R\h c{\fa}N.$$
  In particular,
   \begin{align*}
 &\coass_R\left(\gh{p+c}{\fa}MN\right)\\
 &=\{\fp\in\supp_R(M)\cap\coass_R\left(\h c{\fa}N\right): \pd_{R_\fp}(M_\fp)=p\}.
 \end{align*}
  \end{thm}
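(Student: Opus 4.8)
The plan is to establish the isomorphism $\gh{p+c}{\fa}MN \cong \ext pRMR \otimes_R \h c{\fa}N$ first, since the computation of coassociated primes will follow from it by a standard localization argument. For the isomorphism, I would use a projective resolution $P_\bullet$ of $M$ of length $p$ (which exists since $\pd_R M = p < \infty$), and consider the third-quadrant spectral sequence relating $\operatorname{Ext}$ and generalized local cohomology. Concretely, for a suitable injective resolution or via the hypercohomology of $\operatorname{Hom}_R(P_\bullet, -)$ applied to an injective resolution of $N$, one obtains a spectral sequence whose $E_2$-page has entries $\operatorname{H}^q_{\fa}(\operatorname{Ext}^r_R(M,N))$-type terms, or more usefully the Grothendieck-type spectral sequence $E_2^{r,q} = \operatorname{H}^q_{\fa}\big(\operatorname{Ext}^r_R(\overline{P}_\bullet, N)\big)$. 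The key point is that $\operatorname{H}^i_{\fa}(N) = 0$ for $i > c$ and $\operatorname{Ext}^r_R(M,-)$-type contributions vanish for $r > p$, so the total degree $p+c$ receives a contribution only from the single corner term $(r,q) = (p,c)$; there are no differentials into or out of this corner because the neighboring entries vanish by the dimension bounds. This forces $\gh{p+c}{\fa}MN \cong E_2^{p,c}$, and identifying $E_2^{p,c}$ with $\ext pRMR \otimes_R \h c{\fa}N$ uses that tensoring with the flat-ish direct limit defining local cohomology commutes appropriately, together with $\operatorname{Ext}^p_R(M, -)$ being right exact in the relevant range (as $p = \pd_R M$). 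The vanishing $\gh n{\fa}MN = 0$ for $n > p+c$ is already quoted from \cite[Proposition 2.8]{hv}, so I may simply cite it.

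Next, from the isomorphism $\gh{p+c}{\fa}MN \cong \ext pRMR \otimes_R \h c{\fa}N$, I would compute $\coass_R$ of the right-hand side. The general principle is that $\coass_R(A \otimes_R B) \subseteq \supp_R(A) \cap \coass_R(B)$ always holds, and one wants the reverse inclusion under the hypothesis that $A = \ext pRMR$ is finitely generated. Since $M$ is finitely generated and $R$ is Noetherian, $\ext pRMR$ is finitely generated; for a finitely generated module $A$ and arbitrary $B$, one has $\coass_R(A \otimes_R B) = \supp_R(A) \cap \coass_R(B)$ — this is the dual of the well-known $\ass_R(\operatorname{Hom}_R(A,B)) = \supp_R(A) \cap \ass_R(B)$ and appears in Yassemi's work \cite{y}. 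Applying this with $B = \h c{\fa}N$ gives $\coass_R(\gh{p+c}{\fa}MN) = \supp_R(\ext pRMR) \cap \coass_R(\h c{\fa}N)$. Finally, I rewrite $\supp_R(\ext pRMR)$: for a finitely generated module $M$ with $\pd_R M = p$, one has $\fp \in \supp_R(\ext pRMR)$ if and only if $\operatorname{Ext}^p_{R_\fp}(M_\fp, R_\fp) \neq 0$ if and only if $\pd_{R_\fp}(M_\fp) = p$ (the inequality $\pd_{R_\fp}(M_\fp) \leq p$ always holds, and $\operatorname{Ext}^p$ being the top nonvanishing $\operatorname{Ext}$ against $R_\fp$ detects equality by the Auslander–Buchsbaum-type characterization of projective dimension via vanishing of $\operatorname{Ext}$). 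In particular this forces $M_\fp \neq 0$, so the condition "$\fp \in \supp_R(M)$ and $\pd_{R_\fp}(M_\fp) = p$" is exactly $\fp \in \supp_R(\ext pRMR)$, yielding the displayed formula.

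The main obstacle I anticipate is the careful bookkeeping in the spectral sequence argument: one must be sure that the only surviving contribution in total degree $p+c$ is the corner $E_2^{p,c}$ and, crucially, that $E_2^{p,c} = E_\infty^{p,c}$, which requires checking that all differentials $d_k$ with source or target $(p,c)$ land in a region where the $E_2$-terms already vanish. The incoming differentials come from positions with strictly larger $r$-coordinate (hence $r > p$, giving zero because $\operatorname{Ext}^r$ against a length-$p$ resolution vanishes) and the outgoing differentials go to positions with strictly larger $q$-coordinate combined with $r$ beyond range, or to $q > c$ (giving zero because $\h q{\fa}N = 0$); once this is verified the collapse is immediate. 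A secondary subtlety is justifying the identification of $E_2^{p,c}$ with the tensor product rather than merely with a subquotient — this is where right-exactness of $\operatorname{Ext}^p_R(M,-)$ (valid precisely because $p$ is the projective dimension) and the exactness of filtered colimits are used, together with the fact that $\h c{\fa}N = \varinjlim \operatorname{Ext}^c$-type terms can be pulled outside the tensor. I would also note the degenerate case $c = 0$, i.e. $\operatorname{H}^0_{\fa}(N) = N$ and $N \neq \fa N$ forces $N \neq 0$, where the statement reduces to $\gh p{\fa}MN \cong \ext pRMR \otimes_R \Gamma_{\fa}(N)$ and should be checked directly or seen as the base of the argument.
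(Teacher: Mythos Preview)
Your plan is correct and, at the level of ideas, coincides with the paper's proof. The paper does not run the spectral sequence itself but simply quotes \cite[Proposition~2.8]{hv} for both the vanishing $\gh n{\fa}MN=0$ ($n>p+c$) and the identification $\gh{p+c}{\fa}MN\cong\ext pRM{\h c{\fa}N}$; your corner argument is precisely how that proposition is proved, so you are just re-deriving a cited result. For the passage from $\ext pRM{\h c{\fa}N}$ to $\ext pRMR\otimes_R\h c{\fa}N$ the paper invokes the Eilenberg--Watts theorem directly: $\ext pRM{-}$ is right exact (because $p=\pd_RM$) and, since $M$ is finitely generated over a Noetherian ring, it commutes with direct sums, hence is naturally isomorphic to $\ext pRMR\otimes_R(-)$. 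Your remarks about right-exactness and colimit preservation are exactly the hypotheses of that theorem, so you should name it rather than sketch an ad hoc argument via the direct-limit description of $\h c{\fa}N$; the latter by itself does not give the tensor identification. The remaining two steps---Yassemi's formula $\coass_R(A\otimes_R B)=\supp_R(A)\cap\coass_R(B)$ for finitely generated $A$ \cite[Theorem~1.21]{y}, and the description $\supp_R(\ext pRMR)=\{\fp\in\supp_R(M):\pd_{R_\fp}(M_\fp)=p\}$ via localization---are exactly as in the paper.

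Two small points of bookkeeping in your spectral-sequence sketch: the relevant $E_2$-page is $E_2^{r,q}=\ext rRM{\h q{\fa}N}$ (not $\h q{\fa}{\ext rRMN}$), and with the standard cohomological convention $d_k\colon E_k^{r,q}\to E_k^{r+k,q-k+1}$ the differentials \emph{out of} $(p,c)$ land in first index $>p$ while those \emph{into} $(p,c)$ come from second index $>c$; you have the directions swapped, though the conclusion (both vanish) is of course unaffected.
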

\begin{proof}
Hassanzadeh and Vahidi, in \cite[Proposition 2.8]{hv}, show  that $\gh n{\fa}MN=0$ for all $n>p+c$ and
$$\gh {p+c}{\fa}MN\cong \ext pRM{\h c{\fa}N}.$$

Now the functor  $\ext pRM{\cdot}$ is  additive and right exact. Also since $M$ is a finitely generated module over a Noetherian ring, it follows from \cite[Lemma 3.1.16]{e} that
$\ext pRM{\cdot}$ preserves direct sums and so \cite[Theorem 5.45]{ro} implies that $\ext pRM{\cdot}\cong \ext pRMR\otimes_R(\cdot).$
Therefore
 $$\gh {p+c}{\fa}MN\cong\ext pRMR\otimes_R\h c{\fa}N.$$
Hence, by \cite[Theorem 1.21]{y}, we have
 $$\coass_R\left(\gh{p+c}{\fa}MN\right) =\supp_R\left(\ext pRMR\right)\cap\coass_R\left(\h c{\fa}N\right).$$
Thus to complete the proof, it is sufficient for us to show that
$$\supp_R(\ext pRMR)=\{\fp\in\supp_R(M): \pd_{R_\fp}(M_\fp)=p\}.$$
Suppose that
$\fp\in\supp_R(\ext pRMR)$.
Since $M$  is a finitely generated module over the Noetherian ring $R$, it follows from \cite[Proposition 7.39]{ro} that
$$\ext p{R_\fp}{M_\fp}{R_\fp}\cong(\ext pRMR)_\fp\neq 0.$$
Therefore $\fp\in\supp_R(M)$ and $\pd_{R_\fp}(M_\fp)\geq p$.  Thus  $\pd_{R_\fp}(M_\fp)=p$.
Conversely, assume that $\fp\in\supp_R(M)$ and $\pd_{R_\fp}(M_\fp)=p$. Therefore  $\ext p{R_\fp}{M_\fp}{R_\fp}\neq 0$ by
\cite[Section 19, Lemma 1(iii)]{mat}. Hence $(\ext pRMR)_\fp\neq 0$, and this completes the proof.
\end{proof}
Let the notations and assumptions be as in Corollary \ref{att}.
 Fathi, Tehranian and Zakeri, in \cite[Theorem 5.6]{f2015}, proved that
\begin{align}\tag{\dag}
\att_R(\gh {p+d}{\fa}MN)=\supp_R(\ext pRMR)\cap\att_R(\h d{\fa}N)
\end{align}
 whenever $B:=R/ \ann_R(\h d{\fa}N)$ is a complete semilocal ring.
In the following corollary it is shown that the equality (\dag)  holds without the hypothesis  that  $B$ is a complete semilocal ring.
\begin{corollary}\label{att}
Let $\fa$ be an ideal of $R$ and let $M, N$ be non-zero finitely generated $R$-modules such that   $p:=\pd_R(M)<\infty$    and     $d:=\dim_R(N)<\infty$.
Then   $\gh{p+d}{\fa}MN$ is Artinian and
  \begin{align*}
 \tag{\ddag} &\att_R(\gh{p+d}{\fa}MN)\\
  &=\{\fp\in\supp_R(M)\cap\ass_R(N) : \pd_{R_\fp}( M_\fp)=p, \cd{\fa}{R/\fp}=d \}.
  \end{align*}
   \end{corollary}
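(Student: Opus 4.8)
The plan is to read off the corollary from Theorem \ref{coass} together with two known facts: the description $\att_R(\h d{\fa}N)=\{\fp\in\ass_R(N):\cd{\fa}{R/\fp}=d\}$ of \cite[Theorem 2.5]{d}, and the fact that a module possessing a secondary representation has the same coassociated and attached prime ideals \cite[Theorem 1.14]{y}. Set $c:=\cd{\fa}N$; since $N$ is finitely generated of dimension $d$ we have $c\le d$ by \cite[Theorem 6.1.2]{bs}. If $N=\fa N$, then $\h i{\fa}N=0$ and $\gh i{\fa}MN=0$ for all $i$, so the left-hand side of the claimed equality is empty, while its right-hand side is empty because it is contained in $\att_R(\h d{\fa}N)=\emptyset$; hence from now on I may assume $N\neq\fa N$, so that Theorem \ref{coass} applies.

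First I would dispose of the case $c<d$. Then $\h d{\fa}N=0$, so $\gh{p+d}{\fa}MN=0$ by Theorem \ref{coass} (because $p+d>p+c$); this module is trivially Artinian and has no attached primes. Simultaneously $\{\fp\in\ass_R(N):\cd{\fa}{R/\fp}=d\}=\att_R(\h d{\fa}N)=\emptyset$ by \cite[Theorem 2.5]{d}, so the right-hand side of the asserted equality is empty as well, and the corollary holds in this case.

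The substantive case is $c=d$, and there the argument is pure substitution. Now $\h d{\fa}N$ is Artinian by \cite[Exercise 7.1.7]{bs}; since $\ext pRMR$ is finitely generated, the isomorphism $\gh{p+d}{\fa}MN\cong\ext pRMR\otimes_R\h d{\fa}N$ of Theorem \ref{coass} presents $\gh{p+d}{\fa}MN$ as a homomorphic image of a finite direct sum of copies of $\h d{\fa}N$ (tensor a finite presentation of $\ext pRMR$ with $\h d{\fa}N$), so it too is Artinian (alternatively, quote \cite[Theorem 2.9]{maf} or \cite[Proposition 3.1]{hv}). Being Artinian, both $\gh{p+d}{\fa}MN$ and $\h d{\fa}N$ have secondary representations, hence their coassociated primes agree with their attached primes by \cite[Theorem 1.14]{y}. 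Then Theorem \ref{coass} gives $\att_R(\gh{p+d}{\fa}MN)=\coass_R(\gh{p+d}{\fa}MN)=\{\fp\in\supp_R(M)\cap\coass_R(\h d{\fa}N):\pd_{R_\fp}(M_\fp)=p\}$, while $\coass_R(\h d{\fa}N)=\att_R(\h d{\fa}N)=\{\fp\in\ass_R(N):\cd{\fa}{R/\fp}=d\}$ by \cite[Theorem 2.5]{d}; combining these two identities produces exactly the claimed formula for $\att_R(\gh{p+d}{\fa}MN)$.

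I do not expect a real obstacle here: once Theorem \ref{coass} is granted, the rest is bookkeeping. The only points needing attention are the mismatch between the index $p+d$ appearing in the statement and the index $p+c$ supplied by Theorem \ref{coass} — which is precisely what forces the split into the cases $c<d$ and $c=d$ — and making sure Artinianness of both $\gh{p+d}{\fa}MN$ and $\h d{\fa}N$ is in hand, so that $\coass$ may legitimately be exchanged for $\att$ on both sides.
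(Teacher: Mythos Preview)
Your proof is correct and follows essentially the same approach as the paper: split into the cases $c<d$ and $c=d$, use Theorem \ref{coass} for the tensor isomorphism and the coassociated-prime formula, invoke Artinianness of $\h d{\fa}N$ together with \cite[Theorem 1.14]{y} to pass between $\coass$ and $\att$, and feed in \cite[Theorem 2.5]{d}. The only cosmetic differences are that you handle the degenerate case $N=\fa N$ explicitly and, in the case $c<d$, you deduce emptiness of the right-hand side from \cite[Theorem 2.5]{d} whereas the paper cites \cite[Theorem 1.2]{dnt}; neither changes the substance.
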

    \begin{proof}
We set $c:=\cd{\fa}N$. By Grothendieck's vanishing theorem \cite[Theorem 6.1.2]{bs}, $c\leq d$. Now if $c<d$, then by the previous theorem
$\gh{p+d}{\fa}MN=0$, and so  its set of attached prime ideals is empty. On the other hand, by \cite[Theorem 1.2]{dnt}, for each
$\fp\in\ass_R(N)$, we have $\cd{\fa}{R/\fp}\leq c<d$. Hence the set in the right hand side of   (\ddag) is also empty and so the equality (\ddag)
holds in this case. We may therefore assume that $c=d$. By \cite[Exersise 7.1.7]{bs},  $\h d{\fa}N$  is Artinian and since $\ext pRMR$ is finitely generated,  $\ext pRMR\otimes_R\h d{\fa}N$ is Artinian.  By Theorem \ref{coass}, this module is isomorphic to $\gh {p+d}{\fa}MN$, and so $\gh {p+d}{\fa}MN$  is Artinian (Artinianess of $\gh {p+d}{\fa}MN$ is not a new result, see for example \cite[Theorem 2.9]{maf} or \cite[Proposition 3.1]{hv}).
Hence the  sets of attached prime ideals of $\h d{\fa}N$  and $\gh {p+d}{\fa}MN$ coincide with their sets of coassociated prime ideals \cite[Theorem 1.14]{y}. Therefore the equality (\ddag) follows from \cite[Theorem 2.5]{d} and the last part of Theorem \ref{coass}.
\end{proof}

\begin{corollary}\label{cm}
Let $R$ be a local ring and $\fa$ be an ideal of $R$. Let $M, N$ be non-zero finitely generated $R$-modules such that $M$ is Cohen-Macaulay,
$p:=\pd_R(M)<\infty$ and $d:=\dim_R(N)$. Then we have
  \begin{align*}
  \att_R\gh{p+d}{\fa}MN
  =\{\fp\in\supp_R(M)\cap\ass_R(N) :  \cd{\fa}{R/\fp}=d \}.
  \end{align*}
   \end{corollary}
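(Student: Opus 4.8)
The plan is to read Corollary~\ref{cm} off the formula (\ddag) of Corollary~\ref{att}. The two statements differ only in that (\ddag) carries the extra local condition $\pd_{R_\fp}(M_\fp)=p$, so it is enough to prove that, when $M$ is Cohen--Macaulay of finite projective dimension over the local ring $R$, this condition holds automatically for \emph{every} $\fp\in\supp_R(M)$; the clause may then be deleted from the description of $\att_R(\gh{p+d}{\fa}MN)$ given by Corollary~\ref{att}.

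The key structural input is that a non-zero finitely generated Cohen--Macaulay module of finite projective dimension can exist only over a Cohen--Macaulay local ring (a well-known consequence of the New Intersection Theorem, or of Auslander--Buchsbaum together with the associated intersection inequalities); thus $R$ is Cohen--Macaulay, and so is $R_\fp$ for every prime $\fp$. Hence $\depth R_\fp=\dim R_\fp=\operatorname{ht}\fp$ and $\operatorname{ht}\fp+\dim_R(R/\fp)=\dim R=\depth R$. Now fix $\fp\in\supp_R(M)$. Since localization preserves Cohen--Macaulayness, $M_\fp$ is a non-zero Cohen--Macaulay $R_\fp$-module, so $\depth_{R_\fp}(M_\fp)=\dim_{R_\fp}(M_\fp)$, and, $M$ being Cohen--Macaulay, $\dim_{R_\fp}(M_\fp)+\dim_R(R/\fp)=\dim_R(M)$. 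Feeding these equalities into the Auslander--Buchsbaum formula over $R_\fp$ and over $R$ yields
\begin{align*}
\pd_{R_\fp}(M_\fp)&=\depth R_\fp-\depth_{R_\fp}(M_\fp)=\operatorname{ht}\fp-\dim_{R_\fp}(M_\fp)\\
&=\dim R-\dim_R(M)=\depth R-\depth_R(M)=p.
\end{align*}

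Therefore $\pd_{R_\fp}(M_\fp)=p$ for every $\fp\in\supp_R(M)$ --- equivalently $\supp_R(\ext pRMR)=\supp_R(M)$, by the last part of Theorem~\ref{coass} --- so in (\ddag) the condition $\pd_{R_\fp}(M_\fp)=p$ is redundant, and one reads off exactly $\att_R(\gh{p+d}{\fa}MN)=\{\fp\in\supp_R(M)\cap\ass_R(N):\cd{\fa}{R/\fp}=d\}$. The only genuinely non-formal ingredient is the implication ``$M$ Cohen--Macaulay with $\pd_R M<\infty$ $\Rightarrow$ $R$ Cohen--Macaulay''; everything else is bookkeeping with Auslander--Buchsbaum and the dimension formula for Cohen--Macaulay modules. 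I expect correctly locating and citing that implication (or, equivalently, invoking that such an $M$ is a perfect module, for which $\supp_R(\ext pRMR)=\supp_R(M)$ is classical) to be the one step requiring care.
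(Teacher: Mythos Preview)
Your proposal is correct and follows essentially the same route as the paper: first use the intersection inequality $\dim R\le \pd_R(M)+\dim_R(M)$ together with Auslander--Buchsbaum (and $\depth_R M=\dim_R M$) to conclude that $R$ is Cohen--Macaulay, then for $\fp\in\supp_R(M)$ combine Auslander--Buchsbaum over $R_\fp$ with the Cohen--Macaulay dimension formulas to obtain $\pd_{R_\fp}(M_\fp)=\dim R-\dim_R M=p$, and finally invoke Corollary~\ref{att}. The paper cites Bruns--Herzog for the intersection inequality and the dimension formula; your invocation of the New Intersection Theorem and the equidimensionality of Cohen--Macaulay modules amounts to the same references.
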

 \begin{proof}
  By \cite[Corollary 9.46, Remark 9.4.8(a)]{bh} and the Auslander–-Buchsbaum formula  \cite[Theorem 1.3.3]{bh}, we have
$$\dim_R(R)\leq\pd_R(M)+\dim_R(M)=\depth_R(R).$$
Thus $R$ is Cohen-Macaulay. If $\fp\in\supp_R(M)$, then  $\dim_R(R/\fp)=\dim_R(M/\fp M)$, and so the Auslander-Buchsbaum formula
and \cite[Theorem 2.1.3(b)]{bh} imply that
\begin{align*}
\pd_{R_\fp}(M_\fp)&=\dim_{R_\fp} ({R_\fp})-\dim_{R_\fp}( {M_\fp})\\
&=(\dim_R(R)-\dim_R(R/\fp))-(\dim_R(M)-\dim_R(M/\fp M))\\
&=\dim_R(R)-\dim_R(M)\\
&=p.
\end{align*}
Now the assertion follows from the previous corollary.
\end{proof}
In Theorem \ref{lh}, we are going to prove the Lichtenbaum-Hartshorne vanishing theorem for generalized local cohomology modules.
Before that the following lemma which  extends the flat base change theorem \cite[Theorem 4.3.2]{bs} for generalized local cohomology modules is needed. This lemma is stated in \cite[Lemma 2.1(ii)]{dst} without  proof. Here we give a proof for  the  readers' convenience.
Also, we note that in our proof we need to assume that the first module in the generalized local cohomology module is finitely generated but  in \cite[Lemma 2.1(ii)]{dst} there is no  such a   restriction on the module.

\begin{lemma}[{\cite[Lemma 2.1(ii)]{dst}}]\label{flat}
 Let $\fa$ be an ideal of $R$, $M$ be a finitely generated $R$-module and $N$ be an arbitrary $R$-module. Let $B$ be a flat $R$-algebra. Then for each $i\in\mathbb N_0$, we have
 $$B\otimes_R\gh i{\fa}MN\cong \gh i{\fa B}{B\otimes_RM}{B\otimes_RN}.$$
\end{lemma}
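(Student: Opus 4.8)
The plan is to argue directly from the definition of generalized local cohomology as a direct limit of $\operatorname{Ext}$ modules, combining two standard facts: that $B\otimes_R(-)$ commutes with direct limits, and a flat base change isomorphism for $\operatorname{Ext}$ which is available here precisely because each $M/\fa^nM$ is finitely generated over the Noetherian ring $R$ (this is, in essence, the argument proving the flat base change theorem \cite[Theorem 4.3.2]{bs}, carried out with $R/\fa^n$ replaced by $M/\fa^nM$). To begin, $\gh i{\fa}MN=\varinjlim_n\ext iR{M/\fa^nM}N$, the transition maps being induced by the natural surjections $M/\fa^{n+1}M\twoheadrightarrow M/\fa^nM$; since $B\otimes_R(-)$ is a left adjoint (to restriction of scalars along $R\to B$), it commutes with this direct limit, so that
$$B\otimes_R\gh i{\fa}MN\cong\varinjlim_n\bigl(B\otimes_R\ext iR{M/\fa^nM}N\bigr),$$
with transition maps $\mathrm{id}_B$ tensored with the original ones.

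The key step is to produce, for each $n$, an isomorphism
$$B\otimes_R\ext iR{M/\fa^nM}N\cong\ext iB{B\otimes_R(M/\fa^nM)}{B\otimes_RN}$$
that is natural in $n$. Since $R$ is Noetherian and $M/\fa^nM$ is finitely generated, it admits a resolution $F_\bullet$ by finitely generated free $R$-modules, and flatness of $B$ makes $B\otimes_RF_\bullet$ a free resolution of $B\otimes_R(M/\fa^nM)$ over $B$. For a finitely generated free module the canonical map $B\otimes_R\Hom R{F_j}N\to\Hom B{B\otimes_RF_j}{B\otimes_RN}$ is an isomorphism (it is merely the identification $B\otimes_RN^{k_j}\cong(B\otimes_RN)^{k_j}$), so $B\otimes_R\Hom R{F_\bullet}N\to\Hom B{B\otimes_RF_\bullet}{B\otimes_RN}$ is an isomorphism of cochain complexes; taking cohomology and using once more that $B$ is flat, hence that $B\otimes_R(-)$ commutes with the cohomology of a complex, yields the displayed isomorphism. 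Its naturality in $n$, i.e. its compatibility with the two transition systems, follows from functoriality of the comparison maps used and is routine to verify.

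Finally, $(\fa B)^n=\fa^nB$, so $B\otimes_R(M/\fa^nM)\cong(B\otimes_RM)/(\fa B)^n(B\otimes_RM)$ compatibly with the transition maps, and therefore
$$\varinjlim_n\ext iB{B\otimes_R(M/\fa^nM)}{B\otimes_RN}\cong\gh i{\fa B}{B\otimes_RM}{B\otimes_RN},$$
the right-hand side being, by definition, the $i$-th generalized local cohomology of $B\otimes_RM$ and $B\otimes_RN$ with respect to $\fa B$ (note that $B\otimes_RM$ is finitely generated over $B$). Chaining the three displays proves the lemma.

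The main obstacle is the middle step, and in particular keeping the $\operatorname{Ext}$ base change isomorphism natural in $n$ so that it survives passage to the direct limit. Finite generation of $M$ is used exactly here: it guarantees that each $M/\fa^nM$ has a resolution by \emph{finite} free modules, which is what makes $B\otimes_R\Hom R{F_j}N\to\Hom B{B\otimes_RF_j}{B\otimes_RN}$ an isomorphism. This explains why the finiteness hypothesis, absent from \cite[Lemma 2.1(ii)]{dst}, is imposed in the present statement.
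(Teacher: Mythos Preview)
Your proof is correct and follows essentially the same route as the paper's: commute $B\otimes_R(-)$ past the direct limit, apply flat base change for $\operatorname{Ext}$ with finitely generated first argument, and identify $B\otimes_R(M/\fa^nM)$ with $(B\otimes_RM)/(\fa B)^n(B\otimes_RM)$. The only difference is that where the paper cites the $\operatorname{Ext}$ base change from the literature (\cite[Theorem 3.2.5]{e}) you spell it out via finite free resolutions, and you are more explicit about naturality in $n$; otherwise the arguments coincide.
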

\begin{proof}
For each $n\in\mathbb N_0$, we have
\begin{align*}
B\otimes_R(M/\fa^n M)&\cong B\otimes_RM\otimes_R(R/\fa^n)\\
&\cong(B\otimes_RM)\otimes_BB\otimes_R(R/\fa^n)\\
&\cong(B\otimes_RM)\otimes_B(B/\fa^n B)\\
&\cong(B\otimes_RM)\otimes_B(B/(\fa B)^n)\\
&\cong(B\otimes_RM)/(\fa B)^n(B\otimes_R M).
\end{align*}
Now it follows from \cite[Theorem 5.27]{ro}, \cite[Theorem 3.2.5]{e} and the above isomorphism that
\begin{align*}
B\otimes_R\gh i{\fa}MN&\cong B\otimes_R\underset{n}{\varinjlim}\ext iR{M/\fa^nM}N\\
&\cong\underset{n}{\varinjlim}(B\otimes_R\ext iR{M/\fa^nM}N)\\
&\cong\underset{n}{\varinjlim}(\ext iB{(B\otimes_RM)/(\fa B)^n(B\otimes M)}{B\otimes_RN}\\
&\cong \gh i{\fa B}{B\otimes_RM}{B\otimes_RN}.
\end{align*}
\end{proof}

\begin{thm}[The Lichtenbaum-Hartshorne vanishing theorem for generalized local cohomology modules]\label{lh}
Let $(R, \fm)$  be a local ring and $\fa$ be a proper ideal of $R$. Let $M, N$ be  non-zero finitely generated $R$-modules such that
 $p:=\pd_R(M)<\infty$ and  $d:=\dim_R (N)$. Then the following statements are equivalent:
   \begin{enumerate}[\rm(i)]
   \item $\gh {p+d}{\fa}MN=0$;
   \item for each $\fP\in\supp_{\widehat R}(\widehat M)\cap\ass_{\widehat R}(\widehat N)$
    satisfying     $\pd_{\widehat R_\fP}(\widehat M_{\fP})=p$ and $\dim_{\widehat R}(\widehat R/\fP)=d$, we have  $\dim_{\widehat R}(\widehat R/(\fa\widehat R+\fP))>0$.
        \end{enumerate}
  \end{thm}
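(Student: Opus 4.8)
The plan is to reduce the statement to the case where $R$ is complete, and then read off the equivalence from Corollary~\ref{att} combined with the classical Lichtenbaum--Hartshorne vanishing theorem \cite[Theorem 8.2.1]{bs}.

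First I would reduce to the complete case. Write $H:=\gh{p+d}{\fa}MN$. By Corollary~\ref{att} the module $H$ is Artinian, hence $\fm$-torsion, hence it carries a natural $\widehat R$-module structure and $H\cong\widehat R\otimes_RH$; in particular $H=0$ if and only if $\widehat R\otimes_RH=0$. Since $\widehat R$ is a flat $R$-algebra, Lemma~\ref{flat} gives $\widehat R\otimes_RH\cong\gh{p+d}{\fa\widehat R}{\widehat M}{\widehat N}$. Moreover $\widehat M$ and $\widehat N$ are non-zero finitely generated $\widehat R$-modules, $\dim_{\widehat R}(\widehat N)=\dim_R(N)=d$, and, by the Auslander--Buchsbaum formula together with the invariance of depth under completion, $\pd_{\widehat R}(\widehat M)=\depth\widehat R-\depth\widehat M=\depth R-\depth M=\pd_R(M)=p$. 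Since $\widehat{\widehat R}=\widehat R$, condition~(ii) for the triple $(R,M,N)$ is verbatim condition~(ii) for the triple $(\widehat R,\widehat M,\widehat N)$. Hence it suffices to prove the theorem assuming $R$ is complete.

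Now assume $R$ is complete. Since $H$ is Artinian it has a secondary representation, so $H=0$ if and only if $\att_R(H)=\emptyset$. By Corollary~\ref{att},
$$\att_R(H)=\{\fp\in\supp_R(M)\cap\ass_R(N):\pd_{R_\fp}(M_\fp)=p,\ \cd{\fa}{R/\fp}=d\},$$
so (i) is equivalent to the emptiness of this set, and it remains to identify that emptiness with~(ii). Fix $\fp\in\ass_R(N)$. Then $\dim_R(R/\fp)\le\dim_R(N)=d$, while Grothendieck vanishing \cite[Theorem 6.1.2]{bs} gives $\cd{\fa}{R/\fp}\le\dim_R(R/\fp)$; hence $\cd{\fa}{R/\fp}=d$ already forces $\dim_R(R/\fp)=d$. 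When $\dim_R(R/\fp)=d$, the ring $R/\fp$ is a complete local domain of dimension $d$ with $\ass_{R/\fp}(R/\fp)=\{(0)\}$, so the Lichtenbaum--Hartshorne theorem \cite[Theorem 8.2.1]{bs} applied to $R/\fp$ and the ideal $(\fa+\fp)/\fp$ yields
$$\cd{\fa}{R/\fp}=d\iff\h d{\fa}{R/\fp}\neq0\iff\dim_R\bigl(R/(\fa+\fp)\bigr)=0.$$
Here $\fa+\fp\subseteq\fm$ is proper, so $\dim_R(R/(\fa+\fp))\ge0$, and the equality $\dim_R(R/(\fa+\fp))=0$ is precisely the failure of $\dim_R(R/(\fa+\fp))>0$. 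Combining these observations, $\att_R(H)=\emptyset$ holds if and only if every $\fp\in\supp_R(M)\cap\ass_R(N)$ with $\pd_{R_\fp}(M_\fp)=p$ and $\dim_R(R/\fp)=d$ satisfies $\dim_R(R/(\fa+\fp))>0$, which is exactly statement~(ii) over the complete ring $R$. This establishes the equivalence.

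The step I expect to be the main obstacle is the reduction to the complete case in the second paragraph: one must ensure that $\gh{p+d}{\fa}MN$ is genuinely Artinian (this is where Corollary~\ref{att}, or the cited Artinianness results, is indispensable), that Lemma~\ref{flat} is applicable because its first argument $M$ is finitely generated, and that the invariants $p$ and $d$ are unchanged on passing to $\widehat R$ so that Corollary~\ref{att} may be invoked over $\widehat R$. Once the problem is placed over the complete ring, the remainder is a routine matching of the attached-prime formula with the classical Lichtenbaum--Hartshorne criterion applied to the complete local domains $R/\fp$.
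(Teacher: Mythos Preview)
Your proposal is correct and follows essentially the same strategy as the paper: reduce to the complete case via Lemma~\ref{flat} and invariance of $p,d$ under completion, then translate the vanishing of $\gh{p+d}{\fa}MN$ into emptiness of the attached-prime set from Corollary~\ref{att} and match the condition $\cd{\fa}{R/\fp}=d$ against $\dim_R(R/(\fa+\fp))=0$ using the classical Lichtenbaum--Hartshorne theorem on the complete local domain $R/\fp$. The only cosmetic differences are that the paper invokes faithful flatness of $\widehat R$ (rather than Artinianness of $H$) for the reduction, and splits the final equivalence into two implications, using Grothendieck's non-vanishing theorem for one direction instead of appealing to Lichtenbaum--Hartshorne both ways.
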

\begin{proof}
 $\widehat R$ is a Noetherian local ring with maximal ideal  $\fm \widehat R$ (see \cite[Theorem 8.12]{mat}) and for a finitely generated $R$-module $L$ we have $L\otimes_R\widehat{R}=\widehat{L}$ (see \cite[Theorem 8.7]{mat}). Since  $\widehat R$ is a flat $R$-algebra (see \cite[Theorem 8.8]{mat}), by Lemma
 \ref{flat}, we have
  $${\widehat {R}}\otimes_R\gh i{\fa}MN\cong\gh i{\fa\widehat R}{\widehat M}{\widehat N}.$$
 The above isomorphism implies that $\gh i{\fa}MN=0$
 if and only if $\gh i{\fa\widehat R}{\widehat M}{\widehat N}=0$ because  $\widehat R$ is a faithfully flat $R$-module (see \cite[Theorem 8.14]{mat}).  Also, for a finitely generated $R$-module $L$, by \cite[Corollary 2.1.8(a)]{bh},
 we have
 $$\depth_{\widehat R}(\widehat L)=\depth_R(L),\ \dim_{\widehat R}(\widehat L)=\dim_R(L).$$
 By hypothesis,  $M$ has a finite free resolution. Tensoring a finite free  resolution of $M$ by $\widehat R$ yields  a finite free resolution for $\widehat M$ over $\widehat R$, and so $\pd_{\widehat R}(\widehat M)<\infty$. Now, by the above  equality and the Auslander-Buchsbaum formula, we obtain
  $$\pd_{\widehat R}(\widehat M)=\pd_R(M).$$
  Therefore  we can (and  do) replace  $M$,  $N$ and $R$
 by $\widehat M$,  $\widehat N$ and $\widehat R$ and assume henceforth in this proof that $R$ is complete.

  (i)$\Rightarrow$(ii). Assume that $\gh {p+d}{\fa}MN=0$ and $\fp\in\supp_R(M)\cap\ass_R(N)$ is such that  $\dim_R(R/\fp)=d$ and  $\pd_{R_\fp}( M_{\fp})=p$. Since   $\att_R(\gh {p+d}{\fa}MN)=\emptyset$, by Corollary \ref{att}, we have  $\cd {\fa}{R/\fp}<d$.
It follows from the independence theorem \cite[Theorem 4.2.1]{bs} that  $\h d{\fa+\fp}{R/\fp}\cong\h d{\fa}{R/\fp}=0$. Since  $\dim_R(R/\fp)=d$,  Grothendieck's non-vanishing theorem \cite[Theorem 6.1.4]{bs} implies that the proper ideal $\fa+\fp$ is not $\fm$-primary and so   $\dim_R(R/(\fa+\fp))>0$.

 (ii)$\Rightarrow$(i). Assume that the condition (ii) holds. If $\gh {p+d}{\fa}MN\neq 0$, then   $\att_R(\gh {p+d}{\fa}MN)$ is not empty
and hence, by Corollary \ref{att}, there exists $\fp\in\supp_R(M)\cap\ass_R(N)$ such that    $\cd {\fa}{R/\fp}=d$ and  $\pd_{R_\fp}(M_{\fp})=p$.
By Grothendieck's vanishing theorem \cite[Theorem 6.1.2]{bs}, $\cd {\fa}{R/\fp}\leq \dim_R(R/\fp)$. Thus $\dim_R(R/\fp)=d$ and the statement (ii) yields  $\dim_R(R/(\fa+\fp))>0$.  We set $\bar{R}:=R/\fp$. Now $\bar R$ is a complete local domain of dimension $d$ and $\dim_{\bar R}(\bar R/\fa\bar R)=\dim_R(R/(\fa+\fp))>0$.
Thus   the Lichtenbaum-Hartshorne vanishing theorem (see \cite[Theorem 8.2.1 ]{bs}) for the ring $\bar R$  implies that $\h d{\fa\bar R}{\bar R}=0$. Hence, by the independence theorem, $\h d{\fa}{R/\fp}\cong\h d{\fa\bar R}{\bar R}=0$. Since $\cd{\fa}{R/\fp}\leq\cd{\fa}N\leq d$, we obtain $\cd {\fa}{R/\fp}<d$, which is a contradiction. Therefore
     $\gh {p+d}{\fa}MN=0$.
\end{proof}

\begin{remark}
Let  $(R, \fm)$ be a local ring. Let $M, N$ be  non-zero finitely generated $R$-modules such that $p:=\pd_R(M)<\infty$ and $d:=\dim_R(N)$.
By Grothendieck's vanishing and non-vanishing theorems \cite[Theorems 6.1.2 and 6.1.4]{bs}, we have  $\cd {\fm}N=\dim_R(N)$. The exact value of  $\cdd {\fm}MN$  is unknown under the above assumptions. However, if in addition  $R$ is Cohen-Macaulay, then Divaani-Aazar and Hajikarimi in \cite[Theorem 3.5]{dh} proved  that
$$\cdd {\fm}MN=\dim_R(R)-\grad R{\ann_R(N)}M.$$
We know that $p+d$  is an upper bound for $\cdd {\fm}MN$. If we set   $\fa:=\fm$ in Theorem \ref{lh}, then it is not true to say that
since  $\dim_{\widehat R}(\widehat R/\fm\widehat R+\fP)=0$ for each prime ideal $\fP$ of  $\widehat R$, $\gh {p+d}{\fm}MN$ is non-zero and so $\cdd {\fm}MN=p+d$. The following example shows that  $p+d$ can be a strict upper bound for $\cdd {\fm}MN$.
In fact, when  there is not a prime ideal $\fP$ in $\supp_{\widehat R}(\widehat M)\cap\ass_{\widehat R}(\widehat N)$ satisfying
 $\dim_{\widehat R}(\widehat R/\fP)=d$ and  $\pd_{\widehat R_\fP}(\widehat M_{\fP})=p$, then the statement  (ii) in Theorem \ref{lh} is true and hence $\gh {p+d}{\fm}MN=0$.
\end{remark}

\begin{example}
Let $K$ be a field and $R:=K[[x, y]]$ be the ring of formal power series over $K$ in indeterminates $x,y$.
$R$ is a complete regular local  ring of dimension 2 with maximal ideal $\fm:=(x, y)$. We set  $M:=R/(x^2, xy)$. It follows from $\ass_R(M)=\{(x), (x, y)\}$ that $\depth_R(M)=0$ and $\dim_R(M)=\dim_R(R/(x))=1$. Since $R$ is regular, all modules have  finite projective dimension and  so the Auslander-Buchsbaum formula gives  $\pd_R(M)=2$. Therefore  $\pd_R(M)+\dim_R(R)=4$. Now since  $\ass_R(R)=\{0\}$,
$\supp_R(M)\cap\ass_R(R)=\emptyset$ and so, by Theorem \ref{lh} or Corollary \ref{att}, we obtain  $\gh 4{\fm}MR=0$. Hence
 $$\cdd {\fm}MR<\pd_R(M)+\dim_R(R).$$
 Furthermore, since $R$ is Cohen-Macaulay and $M$ has a finite projective dimension,  the Divaani Azar--Hajikarimi formula implies that
 $$\cdd {\fm}MR=\dim_R(R)-\grad R{\ann_R(R)}M=2-0=2.$$
\end{example}

\section*{Acknowledgements}
 The author would like to thank the referees for careful reading of the Persian version of  this paper
and for helpful suggestions.

\end{document}